\def\@settitle{\begin{center}%
  \baselineskip14\p@\relax
  \bfseries
  \uppercasenonmath\@title
  \@title
  \ifx\@subtitle\@empty\else
     \\[1ex]\uppercasenonmath\@subtitle
     \footnotesize\mdseries\@subtitle
  \fi
  \end{center}%
}
\def\subtitle#1{\gdef\@subtitle{#1}}
\def\@subtitle{}
\newtheorem{rem}{Remark}
\newtheorem{cor}{Corollary}
\newtheorem{thm}{Theorem}
\newtheorem{lem}{Lemma}
\newtheorem{conj}{Conjecture}
\renewcommand{\Re}{\mathbb{R}}
\newcommand{\Eu}{\mathbb{R}}
\begin{document}

\title{Covering the Crosspolytope with Crosspolytopes}

\author{Antal Jo\'os}
\address{Antal Jo\'os\\ Department of Mathematics, University of Duna\'ujv\'aros, T\'ancsics M. u. 1/a, Duna\'ujv\'aros, Hungary, 2400}
\email{joosa@uniduna.hu}

\subjclass[2010]{52C17, 52A20, 52C07, 52B11}
\keywords{crosspolytope, Hadwiger’s covering problem, covering}

\begin{abstract}
Let $\gamma^d_m(K)$ be the smallest positive number $\lambda$ such that the convex body $K$ can be
covered by $m$ translates of $\lambda K$. Let $K^d$ be the $d$-dimensional crosspolytope. It will be proved that $\gamma^d_m(K^d)=1$ for $1\le  m< 2d$, $d\ge4$; $\gamma^d_m(K^d)=\frac{d-1}{d}$ for $m=2d,2d+1,2d+2$, $d\ge4$; $\gamma^d_m(K^d)=\frac{d-1}{d}$ for $ m= 2d+3$, $d=4,5$;
$\gamma^d_m(K^d)=\frac{2d-3}{2d-1}$ for $ m= 2d+4$, $d=4$ and
$\gamma^d_m(K^d)\le\frac{2d-3}{2d-1}$ for $ m= 2d+4$, $d\ge5$. Moreover the Hadwiger’s covering conjecture is verified for the $d$-dimensional crosspolytope.
\end{abstract}
\maketitle

\section{Introduction}\label{sec:intro}

Let $\Eu^d$ be the $d$-dimensional Euclidean space and let $\mathcal{K}^d$ be the set of all convex bodies in $\Eu^d$ with non-empty interior.
Let $p$ and $q$ be points in $\Eu^d$. Let $[p, q]$, $|pq|$ and
$\overrightarrow{pq}$ denote, respectively, the line segment, the distance and the vector with initial point $p$ and terminal point $q$.
If $K\in \mathcal{K}^d$, then let $c^d(K)$ denote
the \emph{covering number} of $K$, i.e., the smallest number of translates of the interior of $K$ such that their union can cover $K$.
Levi \cite{Levi} proved in 1955 that
$$c^2(K)=
\left\{
\begin{array}{cc}
  4 & \text{if } K \text{ is a parallelogram,} \\
  3 & \text{otherwise.} \\
\end{array}\right.$$
In 1957 Hadwiger \cite{Hadwiger} considered this question in any dimensions and posed
\begin{conj}[Hadwiger's covering problem]\label{conj:Hadwiger}
  For every $K \in  \mathcal{K}^d$ we have
$c^d(K) \le 2^d,$
where the equality holds if and only if $K$ is a parallelepiped.
\end{conj}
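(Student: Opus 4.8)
The plan is to translate the statement into the language of illumination, which is the setting in which Hadwiger's problem is most naturally attacked. By the theorem of Boltyanski and, independently, Gohberg--Markus, the covering number $c^d(K)$ defined above equals the \emph{illumination number} $I(K)$: the least number of directions $u_1,\dots,u_m\in\Eu^d\setminus\{o\}$ such that every boundary point $x\in\bd K$ is illuminated by some $u_i$, i.e. $x+\varepsilon u_i\in\inter K$ for all small $\varepsilon>0$. I would first establish this equivalence by a compactness argument on $\bd K$ (each illuminating direction corresponds to a translate of $\inter K$, and conversely), thereby reducing the assertion to $I(K)\le 2^d$ for every $K\in\mathcal{K}^d$, with equality exactly for parallelepipeds.

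For the upper bound I would separate the centrally symmetric case from the general one. In the symmetric case, choosing a basis adapted to $K$ and fixing the $2^d$ sign-patterns of coordinate directions gives a natural candidate illuminating set of size $2^d$; the task is to verify that these directions cover $\bd K$, which one tries to read off from the Gauss map and the support function. For a general $K$ I would attempt an affine reduction: place $K$ in John position and compare the covering problem for $K$ with one for its difference body $K-K$, using a Rogers--Shephard-type volumetric estimate to control how many translates of $\lambda K$ are forced by the volume ratio $\vol(K-K)/\vol(K)$.

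For the equality part I would argue in reverse: if $I(K)=2^d$ is actually attained, then the $2^d$ illuminating directions must be irredundant, so no boundary region can be handed from one direction to another. This rigidity forces $2^d$ extreme points whose normal cones are pairwise non-overlapping, and a convexity argument then pins the facial structure down to that of a parallelepiped, while any strictly convex piece of $\bd K$ would let two directions be merged, lowering $I(K)$ below $2^d$.

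The hard part is the upper bound in full generality, and it is precisely here that the problem remains one of the central open questions of convex geometry. The best known universal bounds on $I(K)$ are exponential of the form $c^d$ with base $c$ close to $4$ (via Rogers's probabilistic covering estimate $\binom{2d}{d}$ up to polynomial factors, and its recent subexponential refinements), which are exponentially weaker than the conjectured $2^d$; no volumetric or illumination argument is known that closes this gap. Consequently the honest expectation from this approach is not a proof in every dimension but a verification for structured bodies whose vertex--facet combinatorics make the illuminating directions explicit --- and this is exactly what the present paper carries out for the crosspolytope $K^d$, where $c^d(K^d)\le 2d\le 2^d$ can be proved directly.
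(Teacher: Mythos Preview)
The statement you were asked to prove is labelled \emph{Conjecture} in the paper, and with good reason: Hadwiger's covering problem is a famous open question, and the paper does \emph{not} claim to prove it. There is no ``paper's own proof'' to compare against. What the paper does prove is the very special case $K=K^d$ (the crosspolytope), obtaining $c^d(K^d)\le 2d$ as a corollary of Theorem~\ref{thm:2d}; since $2d<2^d$ for $d\ge 4$ (and $d=3$ was already done by Lian and Zhang), this verifies the conjecture for crosspolytopes only.

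Your proposal is honest about this: you correctly identify that the illumination reformulation and the volumetric/Rogers--Shephard approach give only exponential bounds with base near $4$, not $2$, and that no known argument closes the gap. That diagnosis is accurate, and your closing sentence matches exactly what the paper achieves. But the earlier paragraphs should not be framed as a ``plan of proof''; they are a survey of partial techniques, none of which yields the full statement. In particular, the sign-pattern illumination set of size $2^d$ does \emph{not} in general illuminate all of $\bd K$ even in the centrally symmetric case (this would already settle the symmetric Hadwiger conjecture, which is open for $d\ge 4$), and the John-position/difference-body reduction loses a factor of $\binom{2d}{d}$ rather than gaining one. So the genuine gap is simply that the conjecture is open: there is no missing lemma you overlooked, but there is also no proof to be had along the lines you sketch.
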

In the literature the Boltyanski’s illumination problem is a similar problem (see. e.g. \cite{Boltyanski-Martini-Soltan}, \cite{Wu},
\cite{Boltyanski-Martini}).
Lassak \cite{Lassak84} proved in 1984 the Hadviger conjecture for the three dimensional centrally symmetric bodies. Rogers and Zong \cite{RogersZong}
presented the upper bound
$c^d(K)\le{2d\choose d}(d\log d+d\log \log d+5d)$ for general $d$-dimensional convex bodies and $c^d(K)\le{2^d}(d\log d+d\log \log d+5d)$ for centrally symmetric  $d$-dimensional convex bodies.
\newpage \noindent Using the idea of \cite{Artstein-Avidan} Huang et. al. \cite{Huang} presented the upper bound $c^d(K)\le c_1 4^de^{-c_2\sqrt{d}}$ for some universal constants $c_1,c_2>0$. K. Bezdek \cite{Bezdek92} proved the Conjecture \ref{conj:Hadwiger}
for convex  polyhedron  in $\Eu^3$ having anaffine symmetry and 
Dekster \cite{Dekster} verified it for convex bodies in $\Eu^3$ symmetric about a plane.

For $K\in \mathcal{K}^d$ and any positive integer $m$, let $\gamma^d_m(K)$ be the smallest positive number $\lambda$ such that $K$ can be
covered by $m$ translations of $\lambda K$, i.e.,
$$\gamma^d_m(K) = \min\left\{\lambda \in\Re^+ :\exists (u_1,\ldots,u_m)\in(\Eu^d)^m, \text{ s.t. } K \subseteq \bigcup\limits_{i=1}^{m}(\lambda K + u_i)\right\}.$$
This is called the \emph{covering functional} of $K$ with respect to $m$ \cite{Wu-Zhang-He} or the \emph{$m$-covering number} of $K$ \cite{Lassak86} \cite{Zong10}.
A translate of $\lambda K^d$ is called a \emph{homothetic copy} of $K^d$. In the following we use the short notation $\lambda K^d$ for a homothetic copy of $K^d$ instead of $\lambda K^d+{\bf v}$ where $\bf v$ is a vector.
Observe,  $\gamma^d_m(K) = 1$, for all $m \le d$, and $\gamma^d_m(K)$ is a non-increasing step sequence for all positive integers $m$
and all convex bodies $K$.
Now $c^d(K) \le m$ for some $m \in Z^+$ if and only if $\gamma ^d_m(K) < 1$ \cite{Boltyanski-Martini-Soltan}. 
Estimating covering functionals of convex bodies plays a crucial role in Chuanming Zong’s quantitative program for attacking Conjecture 1 (see \cite{Zong10} for more details).
Lassak \cite{Lassak86} showed that for every two-dimensional convex domain K,
$\gamma^2_4(K) \le
{\sqrt{2}\over2}.$
Zong \cite{Zong} proved
$\gamma^3_8(C) \le
{2\over3}$
for a bounded three-dimensional convex cone $C$, and
$\gamma^3_8(B_p) \le
\sqrt{2\over3}$
for all the unit ball $B_p$ of the three-dimensional $l_p$ spaces. Wu and He \cite{Wu-He} estimated the value of $\gamma^3_m(P)$ where $P$ is a convex polytope.
Wu et al. \cite{Wu-Zhang-He} determined the value of $\gamma^3_m(K)$ where $K$ is the union of two compact convex sets having no interior points.
(See   \cite{Bezdek92},  \cite{Bezdek12}, \cite{Bezdek-Bisztriczky97}, \cite{Bezdek-Langi-Naszodi}, \cite{Bezdek-Muhammad}, \cite{Bisztriczky-Fodor}, \cite{Boltyanski-Martini-Soltan}, 
\cite{Boltyanski2001}, \cite{Dekster}, \cite{Hadwiger}, \cite{He-Lv-Martini-Wu}, \cite{Ivanov-Strachan},   \cite{Livshyts-Tikhomirov}, \cite{Naszodi}, \cite{Papadoperakis},  \cite{Prymak-Shepelska},   \cite{Schramm}, \cite{Talata} for more information.)

Let $K^d$ be the cross polytope in the $d$-dimensional Euclidean space with diameter $2$, that is, $K^d=\{(x_1,\ldots,x_d):|x_1|+\ldots+|x_d|\le 1\}$.
In 2021 Lian and Zhang \cite{LianZhang}  proved
$$\gamma^3_m(K^3)=
\left\{
\begin{array}{rcl}
  1 & \text{if }& m=1,\ldots,5, \\
  2/3 & \text{if }& m=6,\ldots,9, \\
  3/5 & \text{if }& m=10,\ldots,13, \\
  4/7 & \text{if }& m=14,\ldots,17. \\
\end{array}\right.$$
Let $K\subset \Eu^d$ be a convex body, and denote by $r$ and $s$ points in $K$ such that
$\overrightarrow{rs}\parallel\overrightarrow{pq}$ and $|rs|\ge|r's'|$ where $\{r', s'\}\subset K$ and
$r's'\parallel pq$. The $K$-length of $[p, q]$, or
equivalently, the $K$-distance of $p$ and $q$ is $2|pq|/|rs|$, and it is  denoted by $d_K(p, q)$. If $K$ is the Euclidean $d$-ball, then $d_K(p, q)$ is the Euclidean distance. Let $||p||_s$ be the $s$-norm of $p$, i.e. if $p=(p_1,\ldots,p_d)$, then $||p||_s=\sqrt[s]{\sum_{i=1}^{d}|p_i|^s}$ for $s\ge 1$. The $2$-norm of $p-q$ is the Euclidean distance of the points $p$ and $q$.
Observe, the $d_{K^d}(p,q)$ is the distance of $p-q$ in $1$-norm, i.e. $d_{K^d}(p,q)=||p-q||_1$.

\begin{rem}
Let $p$ and $q$ be different points in $\Eu^d$. If $[p,q]$ lies in a homothetic copy $\lambda K^d$, then $\lambda \ge {1\over2}d_{K^d}(p,q)={1\over2}||p-q||_1$.
\end{rem}

\begin{rem}
Let $p$ be a point in $\Eu^d$. If $p$ lies in a homothetic copy $\lambda K^d$ and $c$ is the centre of this homothetic copy $\lambda K^d$, then $d_{K^d}(c,p)=||c-p||_1 \le \lambda$.
\end{rem}

\begin{thm}\label{thm:<2d}
  If $1\le m<2d$, then $\gamma_m^d(K^d)=1$.
\end{thm}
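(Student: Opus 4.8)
The upper bound $\gamma_m^d(K^d)\le 1$ is immediate, since $K^d$ is itself a translate of $1\cdot K^d$; so the entire content of the statement is the reverse inequality $\gamma_m^d(K^d)\ge 1$ whenever $m<2d$. The plan is to exhibit $2d$ points of $K^d$ no two of which can lie in a common proper homothet, thereby forcing any covering by copies $\lambda K^d$ with $\lambda<1$ to use at least $2d$ pieces. The natural choice is the vertex set.

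First I would take the $2d$ vertices of the crosspolytope, namely $\pm e_1,\dots,\pm e_d$, and record that any two distinct vertices are at $1$-distance exactly $2$: indeed $\|e_i-e_j\|_1=2$ for $i\ne j$, $\|e_i+e_j\|_1=2$, and $\|2e_i\|_1=2$, so that $d_{K^d}(v,w)=2$ for all distinct vertices $v,w$. Next, suppose a single homothetic copy $\lambda K^d$ contained two distinct vertices $v$ and $w$. Since $\lambda K^d$ is convex, the segment $[v,w]$ lies in $\lambda K^d$, so the first Remark above gives $\lambda\ge \frac12 d_{K^d}(v,w)=\frac12\cdot 2=1$. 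Hence every homothet of ratio $\lambda<1$ contains at most one vertex of $K^d$.

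Finally I would assemble the counting argument. If $K^d\subseteq\bigcup_{i=1}^{m}(\lambda K^d+u_i)$ with $\lambda<1$, then each of the $2d$ vertices lies in some piece, and by the previous step distinct vertices must lie in distinct pieces; therefore $m\ge 2d$. Contrapositively, for $m<2d$ no covering with $\lambda<1$ exists, so the feasible ratios all satisfy $\lambda\ge 1$, giving $\gamma_m^d(K^d)\ge 1$; combined with the trivial upper bound this yields $\gamma_m^d(K^d)=1$. I do not expect a serious obstacle here: the whole argument rests on the uniform distance-$2$ property of the vertex set together with the first Remark. The only point worth checking is that the reasoning is insensitive to whether the copies are taken open or closed, which holds because the conclusion $\lambda\ge 1$ already contradicts the assumption $\lambda<1$ strictly; this also confirms that the minimum in the definition of $\gamma_m^d$ is genuinely attained at $1$ rather than merely approached.
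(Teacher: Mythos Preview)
Your proof is correct and follows essentially the same approach as the paper: both arguments show the upper bound trivially, then use the pigeonhole principle on the $2d$ vertices together with Remark~1 to conclude that no homothetic copy of ratio $\lambda<1$ can contain two distinct vertices, forcing $m\ge 2d$. Your version is in fact slightly more explicit in verifying $\|v-w\|_1=2$ for every pair of distinct vertices, whereas the paper handles this more tersely.
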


\begin{proof}
Of course, if $\lambda=1$, then there is a homothetic copy $\lambda K^d$ such that $K^d$ is covered by $\lambda K^d$. 
Thus $\gamma_m^d(K^d)\le1$. Assume $\gamma_m^d(K^d)=\mu<1$.
Since the number of the vertices of $K^d$ is $2d$, then there is a homothetic copy $\mu K^d$ which contains two vertices of $K^d$. Let $v_1$ and $v_2$ be these two vertices.
Observe, the opposite facets of $K^d$ are parallel and any edge of $K^d$ connects two vertices of opposite facets. By Remark 1, $\mu\ge{1\over2}||v_1-v_2||_1={1\over2}d_{K^d}(v_1,v_2)=1$, a contradiction.
\end{proof}

\begin{lem}\label{lem:center is not covered}
  Let $F$ be a facet of $K^d$. If a vertex $v$ of $F$ is covered by $K^d_1$ a  homothetic copy of $K^d$ with ratio $\lambda$ ($0<\lambda<1$), then $F\cap K^d_1$ is contained in the homothetic image of $F$ with ratio $\lambda$ and centre $v$.
\end{lem}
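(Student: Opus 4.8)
The plan is to reduce the containment to a single scalar inequality and then extract that inequality from Remark~1. First I would normalise by applying a symmetry of $K^d$ (a signed coordinate permutation): since the hyperoctahedral group acts transitively on the (facet, vertex) pairs and preserves both $\ell_1$-distances and the homothety relation, I may assume without loss of generality that $F$ is the facet with vertices $e_1,\dots,e_d$, so $F=\{x:\sum_i x_i=1,\ x_i\ge0\}$, and that the distinguished vertex is $v=e_1$. The homothetic image of $F$ with ratio $\lambda$ and centre $v$ is $H=v+\lambda(F-v)$. A direct check shows that a point $q$ lying on the facet hyperplane $\sum_i x_i=1$ with $q_i\ge0$ for $i\ge2$ belongs to $H$ precisely when its first coordinate satisfies $q_1\ge1-\lambda$ (writing $q=v+\lambda(f-v)$ and solving for $f$, the constraints $f_i\ge0$ for $i\ge2$ reduce to $q_i\ge0$, while $f_1\ge0$ reduces exactly to $q_1\ge1-\lambda$, and $\sum f_i=1$ is automatic on the hyperplane). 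Hence the whole lemma reduces to proving the scalar bound $p_1\ge1-\lambda$ for every $p\in F\cap K^d_1$.

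The key step is then short. Take any $p\in F\cap K^d_1$. Both $v$ and $p$ lie in $K^d_1$, and $K^d_1$ is convex, so the segment $[v,p]$ lies in $K^d_1=\lambda K^d+\mathbf{v}$; Remark~1 applied to this homothetic copy gives $\lambda\ge\tfrac12\|v-p\|_1$, i.e. $\|v-p\|_1\le2\lambda$. On the other hand, since $p\in F$ we have $p_i\ge0$ and $\sum_i p_i=1$, so $0\le p_1\le1$ and therefore $\|v-p\|_1=|1-p_1|+\sum_{i\ge2}|p_i|=(1-p_1)+(1-p_1)=2(1-p_1)$. Combining the two estimates yields $2(1-p_1)\le2\lambda$, that is $p_1\ge1-\lambda$, which is exactly the condition identified in the first step. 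Thus $p\in H$, and the lemma follows.

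I do not anticipate a real obstacle: the entire content is the elementary identity $\|v-p\|_1=2(1-p_1)$, valid on the facet, which converts the $\ell_1$-ball constraint imposed by $K^d_1$ into a single halfspace constraint through $v$. The only points that need a little care are (i) confirming that membership in the shrunken facet $H$ is governed solely by the $v$-coordinate, so that no other facet inequality can be violated once $p\in F$; and (ii) verifying that the normalisation really is without loss of generality, i.e. that the chosen symmetry of $K^d$ simultaneously sends $(F,v)$ to the standard pair, fixes the $\ell_1$-metric, and commutes with the homothety. Both are routine, so the argument should go through essentially as sketched.
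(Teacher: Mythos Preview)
Your proof is correct, and it is genuinely different from the paper's. Both arguments normalise to a fixed facet and reduce the claim to the scalar inequality ``the $v$-coordinate of any $p\in F\cap K_1^d$ is at least $1-\lambda$''. The paper establishes that inequality by an explicit hyperplane computation: it takes the facet of $K_1^d$ through the vertex $\mathcal A(v)$ opposite to $v$, writes down its equation, and uses the constraint that $v\in K_1^d$ forces $\mathcal A(v)$ into a specific halfspace; subtracting the two hyperplane equations then yields the bound. You instead invoke Remark~1 to get $\|v-p\|_1\le 2\lambda$ in one stroke, and then observe the identity $\|v-p\|_1=2(1-p_1)$ for $p$ on the facet. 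Your route is shorter and more conceptual---it shows that the lemma is nothing more than the $\ell_1$-diameter bound restricted to $F$---while the paper's argument is self-contained in the sense of not appealing back to Remark~1, at the cost of a somewhat awkward calculation that treats only one bounding hyperplane of $K_1^d$ and has to argue separately that this single halfspace already suffices.
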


\begin{proof}
Let $\alpha$ be the hyperplane $x_1+\ldots +x_d=1$.
Without loss of generality it may be assumed that $v=(0,\ldots,0,1)$ and $F$ is the facet on the hyperplane $\alpha$. Let $\mathcal A$ be the affine transformation such that $K^d_1=\mathcal{A}(K^d)$. It may be assumed that $\mathcal{A}(o)$ lies in $K^d$. Let $w(w_1,\ldots,w_d)=\mathcal{A}(v)$.
Let us assume that $o\not \in K^d_1$ (the opposite case is similar).
Let $F_1$ be the facet of $K^d$ on the hyperplane $x_1+\ldots +x_{d-1}-x_d=1$.
Let $p(p_1,\ldots,p_d)$ be a point of $F\cap \mathcal{A}(F_1)$.
First we will see that $p_d\ge 1-\lambda$.
Observe, the equation of the hyperplane containing $\mathcal{A}(F_1)$ is
$$x_1-w_1+x_2-w_2+\ldots+x_{d-1}-w_{d-1}-x_d+w_d-\lambda=\lambda.$$
Since $p$ lies on $F$ and $\mathcal{A}(F_1)$, we have
\begin{equation}\label{eq:F}
p_1+\ldots+p_d=1
\end{equation}
and
\begin{equation}\label{eq:AF1}
p_1+p_2+\ldots+p_{d-1}-p_d=w_1+\ldots+w_{d-1}-w_d+2\lambda.
\end{equation}
From (\ref{eq:F})-(\ref{eq:AF1}) we have
\begin{equation}\label{eq:3}
p_d=\frac{1}{2}+\frac{1}{2}(-w_1-\ldots-w_{d-1}+w_{d})-\lambda
\end{equation}
Observe, the point $w$ lies in the translated image of $K^d$ by the vector $[0,\ldots,0,2]^T$.
Since $v$ is covered by $K^d_1$, the point $w$ lies in the halfspace bounded by the hyperplane $-x_1-\ldots-x_{d-1}+x_d=1$ and does not contain the origin. Thus $$-w_1-\ldots-w_{d-1}+w_{d}\ge 1.$$
Substituting this into (\ref{eq:3}), we have
$$p_d\ge \frac{1}{2}+\frac{1}{2}-\lambda=1-\lambda$$
Observe $K^d_1$ lies in the halfspace bounded by the hyperplane $\mathcal{A}(F_1)$ and containing the vertex $v$. Moreover $p$ is an arbitrary point of the intersection of this halfspace and $F$. This means any point from the intersection has the property that the last coordinate of the point is not less than $1-\lambda$, which completes the proof of the lemma.
\end{proof}

\begin{thm}\label{thm:2d}
  If $m=2d$, then $\gamma_m^d(K^d)={d-1\over d}$.
\end{thm}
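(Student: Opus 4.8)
The plan is to establish the two inequalities $\gamma_{2d}^d(K^d)\le \frac{d-1}{d}$ and $\gamma_{2d}^d(K^d)\ge \frac{d-1}{d}$ separately. Throughout, recall that the vertices of $K^d$ are the points $\pm e_1,\ldots,\pm e_d$, and that any two distinct vertices are at $K^d$-distance exactly $2$, since $\|\pm e_i \mp e_j\|_1 = 2$ in every case.

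For the upper bound I would exhibit an explicit family of $2d$ translates. Set $\lambda=\frac{d-1}{d}$ and take $C_i^{\pm}=\lambda K^d\pm\frac1d e_i$ for $i=1,\ldots,d$. To verify these cover $K^d$, fix $x\in K^d$ with $S:=\|x\|_1\le 1$, put $a_i=|x_i|$, and let $i^*$ attain $a_{\max}=\max_i a_i$. A direct computation shows that $x\in C_{i^*}^{\pm}$ (with the sign of $x_{i^*}$) iff $S-a_{i^*}+\bigl|a_{i^*}-\tfrac1d\bigr|\le\lambda$; splitting into the cases $a_{\max}\ge\frac1d$ and $a_{\max}<\frac1d$ and using $a_{\max}\ge S/d$, this reduces, in the two cases, to $S\le 1$ and to $(d-2)(1-S)\ge 0$ respectively, both of which hold since $S\le 1$. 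Hence every point is covered and $\gamma_{2d}^d(K^d)\le\frac{d-1}{d}$. This step is routine once the centres are guessed; the only mildly surprising point is that these copies, pushed only slightly off the vertices, still reach in to cover the central region around the origin.

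For the lower bound I would argue by contradiction: suppose $K^d$ is covered by $2d$ translates of $\lambda K^d$ with $\lambda<\frac{d-1}{d}$ (so $\lambda<1$). By Remark~1 and the uniform distance-$2$ fact, no copy can contain two vertices; since $K^d$ has $2d$ vertices and there are $2d$ copies, the copies are in bijection with the vertices, each covering exactly one of them. Now fix the facet $F\colon x_1+\cdots+x_d=1$, whose vertices are $e_1,\ldots,e_d$ and whose centroid is $c=\frac1d(e_1+\cdots+e_d)$. I would then show that the copies matched to the opposite vertices $-e_1,\ldots,-e_d$ cannot meet $F$ at all: every $p\in F$ satisfies $\|p-(-e_i)\|_1=2$, so a copy containing both $-e_i$ and a point of $F$ would force $\lambda\ge 1$ by Remark~1. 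Consequently $F$ is covered solely by the copies matched to $e_1,\ldots,e_d$.

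The crux is then local to $F$. By Lemma~\ref{lem:center is not covered}, the copy covering $e_i$ meets $F$ only inside the homothet of $F$ with centre $e_i$ and ratio $\lambda$. In barycentric coordinates on the simplex $F$, such a homothet at $e_i$ is exactly $\{t:t_i\ge 1-\lambda\}$, so the union of the $d$ homothets covers $F$ iff every point of $F$ has some barycentric coordinate $\ge 1-\lambda$; the worst point is the centroid $c$, all of whose coordinates equal $\frac1d$. Thus $c$ is covered iff $\frac1d\ge 1-\lambda$, i.e.\ $\lambda\ge\frac{d-1}{d}$, contradicting the assumption and forcing $\gamma_{2d}^d(K^d)\ge\frac{d-1}{d}$. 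I expect the main obstacle to be the reduction showing that the opposite-vertex copies are irrelevant to $F$ — that is, pinning down the forced bijection together with the uniform distance-$2$ fact — after which Lemma~\ref{lem:center is not covered} converts the statement into the elementary simplex computation above.
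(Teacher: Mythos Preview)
Your proof is correct and follows essentially the same route as the paper: the same $2d$ translates $\frac{d-1}{d}K^d\pm\frac1d e_i$ for the upper bound, and for the lower bound the same bijection-then-centroid argument via Lemma~\ref{lem:center is not covered}, with the opposite-vertex copies ruled out by the distance-$2$ observation. Your write-up is in fact more explicit than the paper's at both steps (the paper asserts without calculation that the $2d$ copies cover the boundary, and handles the lower bound in two terse sentences), but the underlying ideas are identical.
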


\begin{proof}
Let $\Gamma=\{{d-1\over d}K^d+(\pm{1\over d},0\ldots,0),\ldots,{d-1\over d}K^d+(0\ldots,0,\pm{1\over d})\}$.
Since the homothetic copies in $\Gamma$ cover the surface of $K^d$ and the origin lies in each element in $\Gamma$, $K^d$ is covered by $\Gamma$. Thus $\gamma_m^d(K^d)\le{d-1\over d}$.\\
Assume $\gamma_m^d(K^d)=\mu<{d-1\over d}$.
Since $\mu<{d-1\over d}<1$, there is no  homothetic copy containing two vertices of $K^d$.  Let $F$ be a facet of $K^d$ lying on the hyperplane $x_1+\ldots+x_d=1$ and let $c$ be the centre of the $(d-1)$-simplex $F$. Observe $c(1/d,\ldots,1/d)$. By Lemma 1, $c$ and a vertex of $F$ cannot lie in the same homothetic copy $\mu K^d$. Since ${d-1\over d}<1$, $c$ and a vertex of $K^d$ on the plane $x_1+\ldots+x_d=-1$ cannot lie in the same homothetic copy $\mu K^d$, a contradiction.
\end{proof}

Observe, the proof of Theorem 2 comes from Remark 1. Indeed, the $1$-norm of $||c-v||_1\ge 2\frac{d-1}{d}$ where $v$ is a vertex of $K_1^d$.

\begin{cor}
  We have $c^d(K^d)\le2d$ for $d\ge4$.
	The Hadwiger’s covering problem is solved for the $d$-dimensional crosspolitope for $d\ge4$.
\end{cor}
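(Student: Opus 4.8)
The plan is to deduce both assertions of the corollary directly from Theorem \ref{thm:2d}, combined with the standard equivalence recorded in the introduction: for a fixed positive integer $m$ one has $c^d(K)\le m$ if and only if $\gamma^d_m(K)<1$. Consequently no new geometric construction is needed here; everything reduces to reading off the explicit value of the covering functional at $m=2d$ and comparing it first with the threshold $1$ and then with Hadwiger's bound $2^d$. The substantive work has already been carried out in the proof of Theorem \ref{thm:2d}, so this corollary is purely a matter of assembling consequences.

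For the first claim I would specialize to $m=2d$ and invoke Theorem \ref{thm:2d}, which gives $\gamma^d_{2d}(K^d)=\frac{d-1}{d}$. Since $\frac{d-1}{d}<1$ for every $d\ge1$, the equivalence above immediately yields $c^d(K^d)\le 2d$. (Concretely, the homothets of ratio $\frac{d-1}{d}<1$ furnished by the covering $\Gamma$ in the proof of Theorem \ref{thm:2d} can each be enlarged slightly to a translate of $\inter K^d$, still covering $K^d$, which is the content of the equivalence.)

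For the Hadwiger assertion I would observe that for $d\ge4$ one has $2d<2^d$ (indeed $2\cdot 4=8<16=2^4$, and the exponential dominates thereafter), so the bound just obtained gives
$$c^d(K^d)\le 2d<2^d.$$
Because $K^d$ is not a parallelepiped for $d\ge2$, the strict inequality $c^d(K^d)<2^d$ is exactly what Conjecture \ref{conj:Hadwiger} predicts in the non-parallelepiped case, so the conjecture is verified for the crosspolytope. The only points requiring attention are the entirely routine numerical inequality $2d<2^d$ on the relevant range and the observation that the crosspolytope is distinct from a parallelepiped; there is no genuine obstacle here, as the real content lies in Theorem \ref{thm:2d} rather than in this deduction.
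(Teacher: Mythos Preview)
Your proposal is correct and matches the paper's approach: the corollary is stated without proof immediately after Theorem~\ref{thm:2d}, so the intended argument is exactly the one you give, namely combining $\gamma^d_{2d}(K^d)=\frac{d-1}{d}<1$ with the equivalence $c^d(K)\le m\iff\gamma^d_m(K)<1$ from the introduction and then the elementary inequality $2d<2^d$ for $d\ge4$.
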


\begin{thm}\label{thm:2d+2}
  If $m=2d+1,2d+2$, then $\gamma_m^d(K^d)={d-1\over d}$ for $d\ge 4$.
\end{thm}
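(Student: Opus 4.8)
The plan is to establish the two bounds separately and to reduce everything to the case $m=2d+2$. For the upper bound I would simply invoke monotonicity: since $\gamma^d_m(K^d)$ is a non-increasing sequence in $m$ and $\gamma^d_{2d}(K^d)=\frac{d-1}{d}$ by Theorem \ref{thm:2d}, both $\gamma^d_{2d+1}(K^d)$ and $\gamma^d_{2d+2}(K^d)$ are at most $\frac{d-1}{d}$. For the same reason it suffices to prove the lower bound $\gamma^d_{2d+2}(K^d)\ge\frac{d-1}{d}$, for this forces $\gamma^d_{2d+1}(K^d)\ge\gamma^d_{2d+2}(K^d)\ge\frac{d-1}{d}$ as well, and combined with the upper bounds both values equal $\frac{d-1}{d}$.

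For the lower bound, suppose toward a contradiction that $\mu:=\gamma^d_{2d+2}(K^d)<\frac{d-1}{d}$, so $K^d$ is covered by $2d+2$ copies of $\mu K^d$. Since the minimal $1$-distance between two vertices of $K^d$ is $2$ and $2\mu<2$, Remark 1 shows no copy contains two vertices; as $K^d$ has $2d$ vertices, at least $2d$ of the copies are spent covering vertices, leaving at most two ``spare'' copies that contain no vertex. Next I would record the routine metric fact that for a facet centre $c_F=\frac1d(\epsilon_1,\dots,\epsilon_d)$ (with $\epsilon_i=\pm1$) the $1$-distance to any vertex of $K^d$ is either $\frac{2(d-1)}{d}$ (the $d$ vertices of $F$) or $2$ (the remaining vertices), hence at least $\frac{2(d-1)}{d}>2\mu$. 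By Remark 1 no copy can contain both a vertex and a facet centre, so every one of the $2^d$ facet centres must be covered by one of the at most two spare copies.

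The heart of the argument is bounding how many facet centres a single copy of $\mu K^d$ can hold. Two facet centres whose sign vectors differ in $k$ coordinates are at $1$-distance $\frac{2k}{d}$, so if both lie in one copy then $\frac{2k}{d}\le 2\mu<\frac{2(d-1)}{d}$, giving $k\le d-2$. Identifying facet centres with vertices of the Hamming cube $\{\pm1\}^d$, the centres covered by one copy form a set $S$ of Hamming diameter at most $d-2$, and I claim $|S|\le 2^{d-1}-1$. Indeed, antipodal sign vectors are at distance $d>d-2$, so $S$ meets each of the $2^{d-1}$ antipodal pairs at most once; if $|S|=2^{d-1}$ then $S$ is a transversal, and for any $\eta\in S$ and any coordinate $i$ the vector $-\sigma_i(\eta)$ (flip coordinate $i$, then negate all) lies at distance $d-1$ from $\eta$ and is therefore excluded from $S$, forcing $\sigma_i(\eta)\in S$. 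Thus $S$ would be closed under single-coordinate flips and hence equal all of $\{\pm1\}^d$, contradicting $|S|=2^{d-1}$. Consequently each spare copy covers at most $2^{d-1}-1$ facet centres, the two spare copies cover at most $2^d-2<2^d$ of them, and some facet centre is left uncovered — the required contradiction.

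The main obstacle is exactly this combinatorial diameter bound: the geometric reductions (limiting vertices per copy and separating facet centres from vertices via Remark 1 and the distance computations) are routine, but converting ``at most two spare copies'' into a genuine deficit needs the sharp observation that a $1$-ball of radius below $\frac{d-1}{d}$ cannot capture half of the facet centres. In writing the final version I would verify the flip-closure induction carefully and confirm that the inequality $2^d-2<2^d$ suffices for all $d\ge4$ (indeed for all $d\ge2$), the hypothesis $d\ge4$ being inherited from the surrounding results.
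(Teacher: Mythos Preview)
Your argument is correct and shares the paper's overall scaffold: the upper bound comes from monotonicity and Theorem~\ref{thm:2d}, and for the lower bound one assumes $\mu<\frac{d-1}{d}$, observes that the $2d$ vertices each pin down their own copy, and (via the distance computation, which parallels the paper's appeal to Lemma~\ref{lem:center is not covered}) that those vertex-covering copies miss every facet centre. The divergence is in how the contradiction is closed. The paper simply exhibits the set $S$ consisting of the $2d$ vertices together with the two antipodal centres $\pm(\tfrac1d,\dots,\tfrac1d)$ and notes that any two points of $S$ are too far apart to share a copy; since $|S|=2d+2$ exactly matches the number of copies, this packing observation by itself does not yield a contradiction. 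You take a genuinely different route for this step: you show that a single translate of $\mu K^d$ can contain at most $2^{d-1}-1$ facet centres, by the neat Hamming-cube argument that a diameter-$(d-2)$ set meeting every antipodal pair would be closed under single coordinate flips and hence the whole cube. Two spare copies then cover at most $2^d-2<2^d$ centres, which does force a contradiction. So your approach is not only different at the key point but strictly more complete; the price is the extra combinatorial lemma, while the benefit is that the lower bound is actually finished (and the argument visibly works for all $d\ge 2$, the restriction $d\ge 4$ being, as you say, inherited rather than needed).
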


\begin{proof}
Since $$\gamma_{2d+2}^d(K^d)\le\gamma_{2d+1}^d(K^d)\le \gamma_{2d}^d(K^d)={d-1\over d},$$
from $\gamma_{2d+2}^d(K^d)={d-1\over d}$ it comes the statement of the theorem.
Let it be assumed that $\gamma_{2d+2}^d(K^d)=\mu<{d-1\over d}$.
Let $c_1,\ldots,c_{2^d}$ be the centres of the facets of $K^d$ and let $C$ be the $d$-cube with vertices $c_1,\ldots,c_{d^2}$. By Lemma \ref{lem:center is not covered},  $2d$ homothetic copies of $K^d$ can cover the $2d$ vertices of $K^d$ and the points $c_1,\ldots,c_{2^d}$ are uncovered by these homothetic copies.
Let $S$ be the set containing the vertices of $K^d$ and the centres $(1/d,\ldots,1/d)$ and $(-1/d,\ldots,-1/d)$. By Remark 1, any two points from $S$ cannot lie in a homothetic copy of $K^d$ with ratio $\mu$, a contradiction.
\end{proof}

\begin{lem}\label{lem:diagonal}
  Let $c_1,\ldots,c_{2^d}$ be the centres of the facets of $K^d$. If $\lambda<1$ and the facets containing $c_i$ and $c_j$ are parallel facets, then a homothetic copy $\lambda K^d$ cannot contain $c_i$ and $c_j$.
\end{lem}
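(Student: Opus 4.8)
The plan is to reduce everything to Remark~1 by writing down the exact coordinates of the facet centres. First I would set up coordinates for the facets of $K^d$. The $2^d$ facets are indexed by the sign vectors $\varepsilon=(\varepsilon_1,\dots,\varepsilon_d)\in\{-1,+1\}^d$: the facet $F_\varepsilon$ lies on the hyperplane $\varepsilon_1x_1+\dots+\varepsilon_dx_d=1$, and its vertices are exactly the $d$ points $\varepsilon_j e_j$ for $j=1,\dots,d$ (since a vertex $\pm e_j$ lies on this hyperplane only when the sign matches $\varepsilon_j$). Hence its centroid is $c_\varepsilon=\frac{1}{d}(\varepsilon_1,\dots,\varepsilon_d)$, so the $2^d$ facet centres are precisely the vertices of the cube $[-1/d,1/d]^d$.

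Next I would translate the parallelism hypothesis into these coordinates. Two facets of $K^d$ are parallel exactly when their defining hyperplanes have opposite normals, that is, when the two sign vectors involved are $\varepsilon$ and $-\varepsilon$. Thus if $c_i$ and $c_j$ are the centres of a pair of parallel facets, then $c_j=-c_i$; writing $c_i=\frac{1}{d}(\varepsilon_1,\dots,\varepsilon_d)$ gives $c_i-c_j=\frac{2}{d}(\varepsilon_1,\dots,\varepsilon_d)$, and therefore
$$\|c_i-c_j\|_1=\frac{2}{d}\sum_{k=1}^{d}|\varepsilon_k|=\frac{2}{d}\cdot d=2.$$

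Finally I would invoke Remark~1. If a homothetic copy $\lambda K^d$ contained both $c_i$ and $c_j$, then by convexity it would contain the segment $[c_i,c_j]$, and Remark~1 would force $\lambda\ge\frac{1}{2}d_{K^d}(c_i,c_j)=\frac{1}{2}\|c_i-c_j\|_1=1$, contradicting $\lambda<1$. This proves the lemma.

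I do not expect a serious obstacle here: once the centroid coordinates are in place, the whole argument is the single observation that two antipodal vertices of the cube $[-1/d,1/d]^d$ are at $K^d$-distance exactly $2$. The only step needing a word of care is the identification of ``parallel facets'' with ``opposite sign vectors,'' since this is precisely what pins down $c_j=-c_i$ and makes the $1$-norm come out to $2$ independently of $d$; everything after that is the direct computation above feeding into Remark~1.
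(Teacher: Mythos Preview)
Your proof is correct and follows essentially the same approach as the paper: the paper simply fixes the specific pair of parallel facets on $x_1+\dots+x_d=\pm 1$, reads off the centres $(\pm 1/d,\dots,\pm 1/d)$, computes $\|c_i-c_j\|_1=2$, and applies Remark~1. Your sign-vector parametrisation is a slightly more explicit way of writing the same computation, but the argument is identical.
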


\begin{proof}
Without loss of generality it may be assumed that the two hyperplanes are $\alpha_1:x_1+\ldots+x_d=1$ and $\alpha_2:x_1+\ldots+x_d=-1$. In this case the centres are $c_i(-1/d,\ldots, -1/d)$ and $c_j(1/d,\ldots, 1/d)$.
Now $||c_i-c_j||_1=d_{K^d}(c_i,c_j)=2$. By Remark 1, a homothetic copy $\lambda K^d$ cannot contain $c_i$ and $c_i$.
\end{proof}
\begin{lem}\label{lem:face diagonal}
  Let $c_1,\ldots,c_{2^d}$ be the centres of the facets of $K^d$. If $\lambda<{d-1\over d}$ and the facets containing $c_i$ and $c_j$ have only a vertex in common, then a homothetic copy $\lambda K^d$ cannot contain $c_i$ and $c_j$.
\end{lem}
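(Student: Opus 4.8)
The plan is to mirror the short argument used for Lemma \ref{lem:diagonal}: reduce the statement to a computation of the $1$-norm distance between the two facet centres $c_i$ and $c_j$, and then invoke Remark 1. Since $\lambda K^d$ is convex, if it contained both $c_i$ and $c_j$ it would contain the whole segment $[c_i,c_j]$; so it suffices to show that $[c_i,c_j]$ cannot fit inside a copy of ratio $\lambda<\frac{d-1}{d}$. By Remark 1 this amounts to showing that $\frac{1}{2}||c_i-c_j||_1\ge\frac{d-1}{d}$, i.e. $||c_i-c_j||_1\ge\frac{2(d-1)}{d}$.

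Next I would set up convenient coordinates. Recall that each facet of $K^d$ lies on a hyperplane $\varepsilon_1 x_1+\ldots+\varepsilon_d x_d=1$ with $\varepsilon_k\in\{+1,-1\}$, that its vertices are the signed standard unit vectors $\varepsilon_k e_k$, and that its centre is the point $(\varepsilon_1/d,\ldots,\varepsilon_d/d)$. Two facets share precisely those vertices $\varepsilon_k e_k$ for which their sign vectors agree in coordinate $k$, so having ``only a vertex in common'' means that the two sign vectors agree in exactly one coordinate and differ in the remaining $d-1$. Applying a suitable symmetry of $K^d$, I may assume the common vertex is $(0,\ldots,0,1)$, so that the two centres become $c_i=(1/d,\ldots,1/d,1/d)$ and $c_j=(-1/d,\ldots,-1/d,1/d)$.

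Then the computation is immediate: the two centres agree in the last coordinate and differ by $2/d$ in each of the other $d-1$ coordinates, whence $||c_i-c_j||_1=(d-1)\cdot\frac{2}{d}=\frac{2(d-1)}{d}$. By Remark 1, any homothetic copy of $K^d$ containing the segment $[c_i,c_j]$ must have ratio at least $\frac{1}{2}||c_i-c_j||_1=\frac{d-1}{d}$, contradicting the hypothesis $\lambda<\frac{d-1}{d}$. There is no genuine obstacle here: the only point requiring care is the combinatorial identification of which pairs of facets ``have only a vertex in common'', namely that this is exactly the case of sign vectors differing in $d-1$ coordinates. Once the two centres are written down in these coordinates, the bound follows from Remark 1 in exactly the same way as in Lemma \ref{lem:diagonal}, the present case simply replacing the antipodal distance $2$ by the shorter distance $\frac{2(d-1)}{d}$.
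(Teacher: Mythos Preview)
Your proof is correct and follows essentially the same approach as the paper: reduce by symmetry to explicit coordinates for $c_i$ and $c_j$, compute $\|c_i-c_j\|_1=2(d-1)/d$, and apply Remark~1. The only cosmetic difference is that the paper places the common vertex at $(1,0,\ldots,0)$ rather than $(0,\ldots,0,1)$.
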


\begin{proof}
Without loss of generality it may be assumed that $c_i(1/d,-1/d,\ldots,-1/d)$ and  $c_j(1/d,\ldots,1/d)$. Now the common vertex of the facets of $c_i$ and $c_j$ is $(1,0,\ldots,0)$.
Thus $||c_i-c_j||_1=d_{K^d}(c_i,c_j)=2{d-1\over d}$ and by Remark 1, a homothetic copy $\lambda K^d$ cannot contain $c_i$ and $c_i$.
\end{proof}

\begin{thm}
 If $m=2d+3$, then $\gamma_m^d(K^d)={d-1\over d}$ for $d= 4,5$.
\end{thm}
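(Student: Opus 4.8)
The plan is to combine the monotonicity of $\gamma^d_m$ with a counting argument on the $2^d$ facet centres, running exactly parallel to the proofs of Theorems \ref{thm:2d} and \ref{thm:2d+2} but now squeezing three ``spare'' copies against the number of facet centres. Since $\gamma^d_{2d+3}(K^d)\le\gamma^d_{2d+2}(K^d)=\frac{d-1}{d}$ by Theorem \ref{thm:2d+2} and the non-increasing property of the sequence, it suffices to prove the reverse inequality, so I would assume for contradiction that $K^d$ is covered by $2d+3$ homothetic copies of ratio $\mu<\frac{d-1}{d}$.

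First I would dispose of the $2d$ vertices. Any two distinct vertices of $K^d$ have $K$-distance $2$, so by Remark 1 each copy contains at most one vertex, and hence the vertices occupy $2d$ distinct copies, leaving \emph{at most three} copies free of vertices. Writing a facet centre as $c_\sigma=\frac1d(\sigma_1,\dots,\sigma_d)$ with $\sigma\in\{\pm1\}^d$, the $K$-distance from a vertex to its nearest facet centre equals $2\frac{d-1}{d}$, so Remark 1 shows that a copy containing a vertex contains no facet centre. Therefore all $2^d$ facet centres must be covered by the at most three vertex-free copies.

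Next I would bound how many facet centres one copy can hold. If a copy of ratio $\mu$ contains $c_\sigma$ and $c_\tau$, then $\|c_\sigma-c_\tau\|_1=\frac{2h}{d}$ where $h$ is the number of sign disagreements, so Remark 1 gives $\frac hd\le\mu<\frac{d-1}{d}$, i.e.\ $h\le d-2$; this is exactly what Lemmas \ref{lem:diagonal} and \ref{lem:face diagonal} say for $h=d$ and $h=d-1$. Thus, under the identification $c_\sigma\leftrightarrow\sigma$, the facet centres inside a single copy form a subset of $\{\pm1\}^d$ of Hamming diameter at most $d-2$. The maximal size of such an anticode is $\binom40+\binom41=5$ for $d=4$ (diameter $2$) and $\binom50+\binom51+\binom41=10$ for $d=5$ (diameter $3$), which I would justify either by Kleitman's diametric theorem or, since only $d=4,5$ occur, by a short direct enumeration.

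The counting then closes the argument: three copies cover at most $3\cdot5=15<16=2^4$ facet centres when $d=4$ and at most $3\cdot10=30<32=2^5$ when $d=5$, so some facet centre stays uncovered, a contradiction. I expect the anticode size estimate to be the one genuinely delicate step; everything else is the same $1$-norm bookkeeping used earlier. It is also worth remarking that this is precisely where the restriction $d\le5$ enters, since already for $d=6$ the diameter-$4$ bound $\sum_{i=0}^{2}\binom6i=22$ gives $3\cdot22=66>64=2^6$ and the pigeonhole collapses.
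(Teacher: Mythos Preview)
Your argument is correct and follows the same overall architecture as the paper's proof: the upper bound is inherited from Theorem~\ref{thm:2d+2}, and for the lower bound one shows that the $2d$ vertices monopolise $2d$ of the copies while excluding all $2^d$ facet centres from them, so the remaining three copies would have to cover every facet centre, which is impossible.

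The genuine difference is in how that last impossibility is established. The paper proves it by a long case analysis: for $d=4$ it checks by hand, vertex by vertex through Subcases~1.1--1.5, that no single copy of ratio $\mu<\tfrac{d-1}{d}$ can contain $6$ facet centres, and for $d=5$ it runs an analogous Subcases~2.1--2.6 enumeration to show no copy can contain $11$. You instead observe that the facet centres $c_\sigma$ inside one copy form, under $c_\sigma\leftrightarrow\sigma\in\{\pm1\}^d$, an anticode of Hamming diameter at most $d-2$, and then invoke Kleitman's diametric theorem to get the sharp bounds $5$ and $10$ in one stroke. This is shorter and more conceptual, and it makes the r\^ole of the hypothesis $d\in\{4,5\}$ transparent: the pigeonhole $3\cdot\bigl(\text{Kleitman bound}\bigr)<2^d$ is exactly $15<16$ and $30<32$, whereas for $d=6$ one gets $66>64$ and the count no longer closes. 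The paper's hands-on enumeration, by contrast, is self-contained (no outside theorem is quoted) and in effect reproves the Kleitman bound in these two special dimensions.

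One small point worth making explicit in your write-up: when you say ``the vertices occupy $2d$ distinct copies, leaving at most three copies free of vertices'', the precise statement you need is that at least $2d$ of the $2d+3$ copies contain a vertex (since each such copy contains exactly one vertex and all $2d$ vertices are covered), hence at most three copies are vertex-free. This is what you mean, but stating it this way forestalls any ambiguity about vertices lying in more than one copy.
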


\begin{proof}
By Theorem \ref{thm:2d+2}, $\gamma_{2d+3}^d(K^d)\le
{d-1\over d}$. Let it be assumed that $\gamma_{2d+3}^d(K^d)=\mu<{d-1\over d}$.
Let $c_1,\ldots,c_{2^d}$ be the centres of the facets of $K^d$ and let $C$ be the $d$-cube with vertices $c_1,\ldots,c_{2^d}$. By Lemma \ref{lem:center is not covered}, the $2d$ homothetic copies of $K^d$ can cover the $2d$ vertices of $K^d$ and the points $c_1,\ldots,c_{2^d}$ are uncovered by these homothetic copies.
It will be proved that the vertices of the $d$-cube $C$ cannot be covered by $3$ homothetic copies of $K^d$ with ratio $\mu$.
Let us assume, that the vertices of the $d$-cube $C$ can be covered by $3$ homothetic copies of $K^d$ with ratio $\mu$.
Observe, $c_i$ and $c_j$ lie on two parallel facets of $K_{d}$ if and only it $c_i$ and $c_j$ are the endpoints of a diagonal of $C$. Thus
if $c_i$ and $c_j$ are the endpoints of a diagonal of the $d$-cube $C$, then by Lemma \ref{lem:diagonal}, $c_i$ and $c_j$ cannot be covered a homothetic copy $\mu K^d$.
Observe, $c_i$ and $c_j$ lie on two facets and the two facets have only one vertex in common if and only if $c_i$ and $c_j$ are the endpoints of a diagonal of a facet of $C$.
Thus
if $c_i$ and $c_j$ are the endpoints of a diagonal of a facet of $C$, then by Lemma \ref{lem:face diagonal}, $c_i$ and $c_j$ cannot be covered by a homothetic copy $\mu K^d$.
We distinguish 2 cases.

{\bf Case 1.} We have $d=4$.\\
By the Pigeonhole principle there is a homothetic copy - say $H_1$ - that $H_1$ contains $6$ vertices of the $4$-cube $C$.
Without loss of generality it may be assumed that $c_1=(1/4,1/4,1/4,1/4)$ lies in $H_1$. By Lemma \ref{lem:diagonal}, the vertex $(-1/4,-1/4,-1/4,-1/4)$ cannot lie in $H_1$.
By Lemma \ref{lem:face diagonal}, the vertices $(1/4,-1/4,-1/4,-1/4)$, $(-1/4,1/4,-1/4,-1/4)$, $(-1/4,-1/4,1/4,-1/4)$ and $(-1/4,-1/4,-1/4,1/4)$ cannot lie in $H_1$. Let $c_2=(-1/4,\linebreak1/4,1/4,1/4)$, $c_3=(1/4,-1/4,1/4,1/4)$, $c_4=(1/4,1/4,-1/4,1/4)$, $c_5=(1/4,1/4,1/4,\linebreak-1/4)$, $c_6=(-1/4,-1/4,/4,1/4)$, $c_7=(-1/4,1/4,-1/4,1/4)$, $c_8=(-1/4,1/4,1/4,\linebreak-1/4)$, $c_9=(1/4,-1/4,-1/4,/4)$, $c_{10}=(1/4,-1/4,1/4,-1/4)$, $c_{11}=(1/4,1/4,-1/4,\linebreak-1/4)$ and $S^4=\{c_2,c_3,c_4,c_5\}$.
We distinguish 5 subcases.

{\bf Subcase 1.1.} The homothetic copy $H_1$ contains the vertices $c_2$, $c_3$, $c_4$ and $c_5$.\\
By Lemma \ref{lem:face diagonal}, $H_1$ cannot contain the vertices $c_6, \ldots,c_{11}$. Thus $H_1$ cannot contain 6 vertices of $C$, a contradiction.

{\bf Subcase 1.2.} The homothetic copy $H_1$ contains exactly three vertices from $S^4$.\\
Without loss of generality it may be assumed that $H_1$ contains $c_2$, $c_3$ and $c_4$.
By Lemma \ref{lem:face diagonal}, $H_1$ cannot contain the vertices $c_6, \ldots,c_{11}$. Thus $H_1$ cannot contain 6 vertices of $C$, a contradiction.

{\bf Subcase 1.3.} The homothetic copy $H_1$ contains exactly two vertices from $S^4$.\\
Without loss of generality it may be assumed that $H_1$ contains $c_2$ and $c_3$.
By Lemma \ref{lem:face diagonal}, $H_1$ cannot contain the vertices  $c_7, \ldots,c_{11}$.
Thus $H_1$ cannot contain 6 vertices of $C$, a contradiction.

{\bf Subcase 1.4.} The homothetic copy $H_1$ contains exactly one vertex from $S^4$.\\
Without loss of generality it may be assumed that $H_1$ contains $c_2$.
By Lemma \ref{lem:face diagonal}, $H_1$ cannot contain the vertices $c_9, \ldots,c_{11}$.
Thus $H_1$ cannot contain 6 vertices of $C$, a contradiction.

{\bf Subcase 1.5.} The homothetic copy $H_1$ does not contain any vertex from $S^4$.\\
By Lemma \ref{lem:diagonal}, $H_1$ cannot contain $c_{6}$ and $c_{11}$ in the same time.
By Lemma \ref{lem:diagonal}, $H_1$ cannot contain $c_{7}$ and $c_{10}$ in the same time.
Thus $H_1$ cannot contain 6 vertices of $C$, a contradiction.

{\bf Case 2.} We have $d=5$.\\
By the Pigeonhole principle there is a homothetic copy - say $H_1$ - that $H_1$ contains $11$ vertices of the $5$-cube $C$.
Without loss of generality it may be assumed that $c_1=(1/5,1/5,1/5,\\1/5,1/5)$ lies in $H_1$. By Lemma \ref{lem:diagonal}, the vertex $(-1/5,-1/5,-1/5,-1/5,-1/5)$ cannot lie in $H_1$.
By Lemma \ref{lem:face diagonal}, the vertices $(1/5,-1/5,-1/5,-1/5,-1/5)$,
$(-1/5,1/5,-1/5,-1/5,\linebreak-1/5)$,
$(-1/5,-1/5,1/5,-1/5,-1/5)$,
$(-1/5,-1/5,-1/5,1/5,-1/5)$ and
$(-1/5,-1/5,\linebreak-1/5,-1/5,1/5)$ cannot lie in $H_1$. Let $c_2=(-1/5,1/5,1/5,1/5,1/5)$, $c_3=(1/5,-1/5,1/5,\linebreak1/5,1/5)$, $c_4=(1/5,1/5,-1/5,1/5,1/5)$, $c_5=(1/5,1/5,1/5,-1/5,1/5)$, $c_6=(1/5,1/5,\linebreak1/5,1/5,-1/5)$,
$c_7=(-1/5,-1/5,1/5,1/5,1/5)$,
 $c_8=(-1/5,1/5,-1/5,1/5,1/5)$,
 $c_9=(-1/5,1/5,1/5,-1/5,1/5)$,
 $c_{10}=(-1/5,1/5,1/5,1/5,-1/5)$,
$c_{11}=(1/5,-1/5,-1/5,\linebreak 1/5,1/5)$,
$c_{12}=(1/5,-1/5,1/5,-1/5,1/5)$,
$c_{13}=(1/5,-1/5,1/5,1/5,-1/5)$,
$c_{14}=(1/5,1/5,-1/5,-1/5,1/5)$,
$c_{15}=(1/5,1/5,-1/5,1/5,-1/5)$,
$c_{16}=(1/5,1/5,1/5,-1/5,\linebreak-1/5)$,
$c_{17}=(-1/5,-1/5,-1/5,1/5,1/5)$,
$c_{18}=(-1/5,-1/5,1/5,-1/5,1/5)$,
$c_{19}=(-1/5,-1/5,1/5,1/5,-1/5)$,
$c_{20}=(-1/5,1/5,-1/5,-1/5,1/5)$,
$c_{21}=(-1/5,1/5,-1/5,\linebreak 1/5,-1/5)$,
$c_{22}=(-1/5,1/5,1/5,-1/5,-1/5)$,
$c_{23}=(1/5,-1/5,-1/5,-1/5,1/5)$,
$c_{24}=(1/5,-1/5,-1/5,1/5,-1/5)$,
$c_{25}=(1/5,-1/5,1/5,-1/5,-1/5)$,
$c_{26}=(1/5,1/5,-1/5,\linebreak-1/5,-1/5)$ and $S^5=\{c_2,c_3,c_4,c_5,c_6\}$.
We distinguish 6 subcases.

{\bf Subcase 2.1.} The homothetic copy $H_1$ contains the vertices $c_2$, $c_3$, $c_4$, $c_5$ and $c_6$.\\
By Lemma \ref{lem:face diagonal}, $c_{17},\ldots,c_{26}$ cannot lie in $H_1$.
Let it be assumed that $H_1$ contains $c_7$. (The cases $H_1$ contains $c_8,\ldots,c_{15}$, or $c_{16}$ are similar.) By Lemma \ref{lem:face diagonal}, $H_1$ cannot contain  $c_{14}$, $c_{15}$ or $c_{16}$.
By Lemma \ref{lem:face diagonal}, $c_{8}$ and $c_{12}$ cannot lie in $H_1$ in the same time. By Lemma \ref{lem:face diagonal}, $c_{9}$ and $c_{13}$ cannot lie in $H_1$ in the same time. By Lemma \ref{lem:face diagonal}, $c_{10}$ and $c_{11}$ cannot lie in $H_1$ in the same time.
Thus $H_1$ cannot contain 11 vertices of $C$, a contradiction.

{\bf Subcase 2.2.} The homothetic copy $H_1$ contains exactly four vertices from $S^5$.\\
Without loss of generality it may be assumed that $H_1$ contains $c_2$, $c_3$, $c_4$ and $c_5$.
By Lemma \ref{lem:face diagonal}, $H_1$ cannot contain the vertices $c_{17}, \ldots,c_{26}$.\\
Let us assume that $H_1$ contains $c_7$. (The cases that $H_1$ contains $c_8$, $c_9$, $c_{11}$, $c_{12}$ or $c_{14}$ are similar.)
By Lemma \ref{lem:face diagonal}, $H_1$ cannot contain $c_{14}$, $c_{15}$ or $c_{16}$. By Lemma \ref{lem:face diagonal}, $H_1$ cannot contain $c_{8}$ and $c_{12}$ in the same time. By Lemma \ref{lem:face diagonal}, $H_1$ cannot contain $c_{9}$ and $c_{11}$ in the same time.
Thus $H_1$ cannot contain 11 vertices of $C$, a contradiction.\\
Let us assume that $H_1$ contains $c_{10}$ (and does not contain the vertices $c_7,c_8,c_9,c_{11},c_{12}$ or $c_{14}$). (The cases that $H_1$ contains $c_{13}$, $c_{15}$ or $c_{16}$ are similar.)
Now, $H_1$ can contain the vertices $c_1,\ldots,c_5$, $c_{10}$, $c_{13}$, $c_{15}$,  $c_{16}$,
thus $H_1$ cannot contain 11 vertices of $C$, a contradiction.

{\bf Subcase 2.3.} The homothetic copy $H_1$ contains exactly three vertices from $S^5$.\\
Without loss of generality it may be assumed that $H_1$ contains $c_2$, $c_3$ and $c_4$.
By Lemma \ref{lem:face diagonal}, $H_1$ cannot contain the vertices $c_{18}, \ldots,c_{26}$.\\
Let us assume that $H_1$ contains $c_7$. (The cases $H_1$ contains $c_8$ or $c_{11}$ are similar.)
By Lemma \ref{lem:face diagonal}, $H_1$ cannot contain $c_{14}$, $c_{15}$ or $c_{16}$.
By Lemma \ref{lem:face diagonal}, $H_1$ cannot contain $c_{8}$ and $c_{12}$ in the same time. By Lemma \ref{lem:face diagonal}, $H_1$ cannot contain $c_{9}$ and $c_{13}$ in the same time. By Lemma \ref{lem:face diagonal}, $H_1$ cannot contain $c_{10}$ and $c_{11}$ in the same time.
Thus $H_1$ cannot contain 11 vertices of $C$, a contradiction.\\
Let us assume that $H_1$ contains $c_9$ (and does not contain the vertices $c_7,c_8$ or $c_{11}$). (The cases $H_1$ contains $c_{10}$, $c_{12}$, $c_{13}$, $c_{14}$ or $c_{15}$ are similar.)
By Lemma \ref{lem:face diagonal}, $H_1$ cannot contain $c_{11}$, $c_{13}$ or $c_{15}$.
Since $H_1$ can contain the $c_1,\ldots, c_4$, $c_9$, $c_{10}$, $c_{12}$,  $c_{14}$, $c_{16}$, $c_{17}$,
$H_1$ cannot contain 11 vertices of $C$, a contradiction.\\
Let us assume that $H_1$ contains $c_{16}$ (and does not contain the vertices $c_7,\ldots,c_{14}$ or $c_{15}$).
Since $H_1$ can contain the $c_1,\ldots, c_4$, $c_{16}$, $c_{17}$,
$H_1$ cannot contain 11 vertices of $C$, a contradiction.\\
Let us assume that $H_1$ contains $c_{17}$ (and does not contain the vertices $c_7,\ldots,c_{15}$ or $c_{16}$).
Since $H_1$ can contain the $c_1,\ldots, c_4$, $c_{17}$,
$H_1$ cannot contain 11 vertices of $C$, a contradiction.

{\bf Subcase 2.4.} The homothetic copy $H_1$ contains exactly two vertices from $S^5$.\\
Without loss of generality it may be assumed that $H_1$ contains $c_2$ and $c_3$.
By Lemma \ref{lem:face diagonal}, $H_1$ cannot contain the vertices $c_{20}, \ldots,c_{26}$.\\
Let us assume that $H_1$ contains $c_7$.
By Lemma \ref{lem:face diagonal}, $H_1$ cannot contain $c_{14}$, $c_{15}$ or $c_{16}$.
By Lemma \ref{lem:face diagonal}, $H_1$ cannot contain $c_{8}$ and $c_{12}$ in the same time. By Lemma \ref{lem:face diagonal}, $H_1$ cannot contain $c_{9}$ and $c_{13}$ in the same time. By Lemma \ref{lem:face diagonal}, $H_1$ cannot contain $c_{10}$ and $c_{11}$ in the same time.
Thus $H_1$ cannot contain 11 vertices of $C$, a contradiction.\\
Let us assume that $H_1$ contains $c_8$ (and does not contain the vertex $c_7$). (The cases $H_1$ contains $c_{9}$, $c_{10}$, $c_{11}$, $c_{12}$ or $c_{13}$ are similar.)
By Lemma \ref{lem:face diagonal}, $H_1$ cannot contain $c_{12}$, $c_{13}$ or $c_{16}$.
By Lemma \ref{lem:face diagonal}, $H_1$ cannot contain $c_{9}$ and $c_{11}$ in the same time. By Lemma \ref{lem:face diagonal}, $H_1$ cannot contain $c_{10}$ and $c_{14}$ in the same time.
Thus $H_1$ cannot contain 11 vertices of $C$, a contradiction.\\
Let us assume that $H_1$ contains $c_{14}$ (and does not contain the vertices $c_7,\ldots,c_{12}$ or $c_{13}$). (The cases $H_1$ contains $c_{15}$ or $c_{16}$ are similar.)
Since $H_1$ can contain the $c_1$, $c_2$, $c_3$, $c_{14},\ldots,c_{19}$,
$H_1$ cannot contain 11 vertices of $C$, a contradiction.\\
Let us assume that $H_1$ contains $c_{17}$ (and does not contain the vertices $c_7,\ldots,c_{15}$ or $c_{16}$). (The cases $H_1$ contains $c_{18}$ or $c_{19}$ are similar.)
Since $H_1$ can contain the $c_1$, $c_2$, $c_3$, $c_{17}$, $c_{18}$, $c_{19}$,
$H_1$ cannot contain 11 vertices of $C$, a contradiction.

{\bf Subcase 2.5.} The homothetic copy $H_1$ contains exactly one vertex from $S^5$.\\
Without loss of generality it may be assumed that $H_1$ contains $c_2$.
By Lemma \ref{lem:face diagonal}, $H_1$ cannot contain the vertices $c_{23}, \ldots,c_{26}$.\\
Let us assume that $H_1$ contains $c_7$. (The cases $H_1$ contains $c_{8}$, $c_{9}$ or $c_{10}$ are similar.)
By Lemma \ref{lem:face diagonal}, $H_1$ cannot contain $c_{14}$, $c_{15}$ or $c_{16}$.
By Lemma \ref{lem:face diagonal}, $H_1$ cannot contain $c_{8}$ and $c_{12}$ in the same time. By Lemma \ref{lem:face diagonal}, $H_1$ cannot contain $c_{9}$ and $c_{13}$ in the same time. By Lemma \ref{lem:face diagonal}, $H_1$ cannot contain $c_{10}$ and $c_{11}$ in the same time. By Lemma \ref{lem:face diagonal}, $H_1$ cannot contain $c_{17}$ and $c_{22}$ in the same time. By Lemma \ref{lem:face diagonal}, $H_1$ cannot contain $c_{18}$ and $c_{21}$ in the same time.
Thus $H_1$ cannot contain 11 vertices of $C$, a contradiction.\\
Let us assume that $H_1$ contains $c_{11}$ (and does not contain the vertices $c_7,\ldots,c_{9}$ or $c_{10}$). (The cases $H_1$ contains $c_{12}$, $c_{13}$, $c_{14}$, $c_{15}$, $c_{16}$ are similar.)
By Lemma \ref{lem:face diagonal}, $H_1$ cannot contain $c_{16}$.
By Lemma \ref{lem:diagonal}, $H_1$ cannot contain $c_{22}$.
By Lemma \ref{lem:diagonal}, $H_1$ cannot contain $c_{12}$ and $c_{21}$ in the same time. By Lemma \ref{lem:diagonal}, $H_1$ cannot contain $c_{13}$ and $c_{20}$ in the same time.
Thus $H_1$ cannot contain 11 vertices of $C$, a contradiction.\\
Let us assume that $H_1$ contains $c_{17}$ (and does not contain the vertices $c_7,\ldots,c_{15}$ or $c_{16}$). (The cases $H_1$ contains $c_{18},\ldots ,c_{22}$ are similar.)
Since $H_1$ can contain the $c_1$, $c_2$, $c_{17},\ldots,c_{22}$,
$H_1$ cannot contain 11 vertices of $C$, a contradiction.

{\bf Subcase 2.6.} The homothetic copy $H_1$ does not contain any vertex from $S^5$.\\
Let us assume that $H_1$ contains $c_7$. (The cases $H_1$ contains $c_{8},\ldots,c_{16}$ are similar.)
By Lemma \ref{lem:face diagonal}, $H_1$ cannot contain $c_{14}$, $c_{15}$ or $c_{16}$.
By Lemma \ref{lem:diagonal}, $H_1$ cannot contain $c_{26}$.
By Lemma \ref{lem:face diagonal}, $H_1$ cannot contain $c_{8}$ and $c_{12}$ in the same time. By Lemma \ref{lem:face diagonal}, $H_1$ cannot contain $c_{9}$ and $c_{13}$ in the same time. By Lemma \ref{lem:face diagonal}, $H_1$ cannot contain $c_{10}$ and $c_{11}$ in the same time.
By Lemma \ref{lem:face diagonal}, $H_1$ cannot contain $c_{17}$ and $c_{25}$ in the same time. By Lemma \ref{lem:face diagonal}, $H_1$ cannot contain $c_{18}$ and $c_{24}$ in the same time.
By Lemma \ref{lem:face diagonal}, $H_1$ cannot contain $c_{19}$ and $c_{20}$ in the same time.
By Lemma \ref{lem:face diagonal}, $H_1$ cannot contain $c_{21}$ and $c_{23}$ in the same time.
Thus $H_1$ cannot contain 11 vertices of $C$, a contradiction.\\
Let us assume that $H_1$ contains $c_{17}$ (and does not contain the vertices $c_7,\ldots,c_{15}$ or $c_{16}$). (The cases $H_1$ contains $c_{18},\ldots ,c_{26}$ are similar.)
By Lemma \ref{lem:face diagonal}, $H_1$ cannot contain $c_{22}$, $c_{25}$ or $c_{26}$.
Since $H_1$ can contain the $c_1$, $c_2$, $c_{17}$, $c_{18}$, $c_{19}$, $c_{20}$, $c_{21}$, $c_{23}$, $c_{24}$,
$H_1$ cannot contain 11 vertices of $C$, a contradiction.
\end{proof}

Since $\gamma_{2d+3}^d(K^d)\le\gamma_{2d+2}^d(K^d)$ and $\gamma_{2d+2}^d(K^d)={d-1\over d}$ for $d\ge 6$, $\gamma_{2d+2}^d(K^d)\le{d-1\over d}$ for $d\ge 6$.

\begin{conj}
  If $m=2d+3$, then $\gamma_m^d(K^d)={d-1\over d}$ for $d\ge 6$.
\end{conj}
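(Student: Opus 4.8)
The upper bound $\gamma^d_{2d+3}(K^d)\le\frac{d-1}{d}$ is already in hand, so the whole task is the matching lower bound. Assume $K^d$ is covered by $2d+3$ homothetic copies of ratio $\mu<\frac{d-1}{d}$ and argue for a contradiction exactly as in the cases $d=4,5$. Since any two vertices of $K^d$ are at $1$-distance at least $2$ and $\mu<1$, each copy contains at most one vertex, so $2d$ copies are used up on the $2d$ vertices. A copy covering a vertex $v$ has its centre within $1$-distance $\mu$ of $v$ by Remark 2, whereas every facet centre $c_\epsilon=(\epsilon_1/d,\ldots,\epsilon_d/d)$ satisfies $\|v-c_\epsilon\|_1\ge 2\frac{d-1}{d}>2\mu$; hence a vertex-copy reaches no facet centre, and the at most three remaining copies must cover all $2^d$ facet centres. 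Because $\|c_\epsilon-c_{\epsilon'}\|_1=\frac{2}{d}h(\epsilon,\epsilon')$, where $h$ denotes Hamming distance of sign vectors, Remark 1 forces the facet centres inside one copy to have pairwise Hamming distance at most $d-2$ (the extreme cases $h=d$ and $h=d-1$ being Lemmas \ref{lem:diagonal} and \ref{lem:face diagonal}). A covering would thus partition $\{0,1\}^d$ into three sets of Hamming diameter at most $d-2$, i.e.\ properly $3$-colour the graph $G_d$ on $\{0,1\}^d$ whose edges join pairs at Hamming distance $d-1$ or $d$. The plan is therefore to prove $\chi(G_d)\ge4$ for $d\ge6$.

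For $d=4,5$ this is what the displayed case analysis achieves: by pigeonhole one colour class holds at least $\lceil 2^d/3\rceil$ facet centres, and the casework shows that no single copy can hold that many mutually compatible centres, i.e.\ that the independence number $\alpha(G_d)$ is below $\lceil 2^d/3\rceil$. By Kleitman's diameter theorem a maximum anticode of diameter $d-2$ is a Hamming ball of radius $\lfloor\frac{d-2}{2}\rfloor$, so $\alpha(G_4)=5$ and $\alpha(G_5)=10$, and $3\alpha(G_d)<2^d$ closes the argument. The obstruction for $d\ge6$ is that this count collapses: Kleitman gives $\alpha(G_6)=\binom{6}{0}+\binom{6}{1}+\binom{6}{2}=22$, so $3\alpha(G_6)=66\ge 64$, and $3\alpha(G_d)\ge 2^d$ for every $d\ge6$. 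As $G_d$ is a vertex-transitive Cayley graph its fractional chromatic number equals $2^d/\alpha(G_d)$, which tends to $2$; thus any counting or fractional relaxation yields only $\chi(G_d)\ge3$, and the inequality $\chi(G_d)\ge4$ lies squarely in the integrality gap.

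To cross that gap I would first treat the small-surplus cases $d=6,7$ by a stability argument. Since $3\alpha(G_d)$ barely exceeds $2^d$, any hypothetical $3$-colouring must make each colour class almost a maximum Kleitman anticode, hence essentially a Hamming ball $B_r(z_i)$ with $r=\lfloor\frac{d-2}{2}\rfloor$; one then contradicts this by the covering-radius fact that no three balls of radius $r$ cover $\{0,1\}^d$, and afterwards rules out the bounded perturbations allowed by stability. For larger $d$ this route weakens because $3\alpha(G_d)/2^d\to\frac32$ leaves too much slack to pin the classes to balls. There the more promising idea is to enlarge the test set: adjoin to the facet centres the centres of suitable lower-dimensional faces (ridges, $2$-faces) that are still far in $1$-distance from every vertex, so that the vertex-copies still miss them while the new incompatibility graph $G_d'$ has a small enough independence ratio to restore a working count $3\alpha(G_d')<|T|$ on the chosen test set $T$.

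The main obstacle is precisely this step. Our reduction only proves the implication ``geometric covering $\Rightarrow$ proper $3$-colouring of $G_d$'', so a proof of the conjecture must certify the integrality-gap inequality $\chi(G_d)\ge4$ for all $d\ge6$ by some structural or topological argument that replaces the failed count, or else abandon the facet centres in favour of a richer, provably-uncovered family of test points whose $1$-distance spread forces a fourth copy. Deciding which of these works uniformly in $d$ --- equivalently, showing that the hypercube cannot be covered by three anticodes of diameter $d-2$ once the naive bound is lost --- is exactly the content of the conjecture, and I expect it to be the genuinely hard part.
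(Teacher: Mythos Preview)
The statement you were asked to prove is labelled \emph{Conjecture} in the paper; the paper does not prove it and offers no proof sketch beyond the trivial upper bound $\gamma^d_{2d+3}(K^d)\le\frac{d-1}{d}$ inherited from Theorem~\ref{thm:2d+2}. So there is no ``paper's own proof'' to compare against: the author leaves the lower bound for $d\ge6$ open.

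Your write-up is accordingly not a proof either, and you are candid about this. Your reduction is correct: the $2d$ vertex-copies miss every facet centre (this is exactly the content of Lemma~\ref{lem:center is not covered} together with Remark~1), and the residual problem is whether three translates of $\mu K^d$ with $\mu<\frac{d-1}{d}$ can cover the $2^d$ facet centres, which via $\|c_\epsilon-c_{\epsilon'}\|_1=\frac{2}{d}h(\epsilon,\epsilon')$ implies a partition of $\{0,1\}^d$ into three anticodes of Hamming diameter at most $d-2$. Your diagnosis of why the paper's $d=4,5$ argument does not extend is on point: the pigeonhole step needs $3\alpha(G_d)<2^d$, and Kleitman's diametric theorem gives $\alpha(G_d)=\sum_{i\le\lfloor(d-2)/2\rfloor}\binom{d}{i}$ for even $d$ (and the doubled-ball analogue for odd $d$), so the count first fails at $d=6$ ($3\cdot22=66>64$) and stays failed thereafter.

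One caveat worth flagging in your outline: the reduction is one-directional. A $3$-colouring of $G_d$ (three diameter-$(d-2)$ anticodes covering the cube) need not come from an actual covering by three translates of $\mu K^d$, because a translate imposes more than pairwise $\ell_1$-diameter constraints on the centres it contains---it forces them into a single $\ell_1$-ball of radius $\mu$. So even if $\chi(G_d)=3$ for some $d\ge6$, the geometric conjecture could still hold; conversely, proving $\chi(G_d)\ge4$ would settle it but may be strictly harder than necessary. Your suggestion to enlarge the test set beyond facet centres is the natural way to exploit this slack.
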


\begin{thm}
  If $m=2d+4$, then $\gamma_m^d(K^d)={2d-3\over 2d-1}$ for $d=4$ and $\gamma_m^d(K^d)\le{2d-3\over 2d-1}$ for $d\ge 5$.
\end{thm}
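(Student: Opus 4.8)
The plan is to prove the two inequalities separately: one explicit covering that works for every $d\ge 4$ gives the bound $\gamma^d_{2d+4}(K^d)\le\frac{2d-3}{2d-1}$, and for $d=4$ a matching lower bound is obtained by reducing to a covering problem near the facet centres, followed by a case analysis in the spirit of the proof for $m=2d+3$.

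\textbf{Upper bound (all $d\ge4$).} Write $\lambda=\frac{2d-3}{2d-1}$ and $\mu=1-\lambda=\frac{2}{2d-1}$. I would take $2d$ \emph{vertex copies} $\lambda K^d\pm\mu e_i$ ($i=1,\dots,d$) together with $4$ \emph{central copies} $\lambda K^d+\mu(\varepsilon_{d-1}e_{d-1}+\varepsilon_de_d)$ with $\varepsilon_{d-1},\varepsilon_d\in\{-1,1\}$; this is exactly $2d+4=m$ homothetic copies. On a facet $F$ with vertices $v_1,\dots,v_d$, written in barycentric coordinates $\beta_1,\dots,\beta_d$, the intersection of $\lambda K^d+\mu v_j$ with the hyperplane of $F$ is precisely the homothet $\{\beta_j\ge\mu\}$ of $F$ at $v_j$ (the equality case of Lemma \ref{lem:center is not covered}); hence the $2d$ vertex copies cover $F$ except the central simplex $U_F=\{\beta_i<\mu\text{ for all }i\}$, whose vertices are the points having one barycentric coordinate $1-(d-1)\mu=\frac{1}{2d-1}$ and the remaining ones equal to $\mu$. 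Grouping the $2^d$ facets into four families by the signs of their outer normal in the last two coordinates, I would verify that the central copy with matching signs contains every vertex of each $U_F$ in its family: the extreme vertex, with small coordinate in position $d-1$ or $d$, lies at $\ell_1$-distance $\frac{2d-4}{2d-1}+\frac{1}{2d-1}=\lambda$ from the centre $\mu(\varepsilon_{d-1}e_{d-1}+\varepsilon_de_d)$, and every other vertex is strictly closer. By convexity each central copy covers the $2^{d-2}$ sets $U_F$ of its family, so all $U_F$ are covered and the surface of $K^d$ is covered. Since $\frac{2}{2d-1}\le\lambda$ and $\frac{4}{2d-1}\le\lambda$ for $d\ge4$, the origin lies in every copy, and the ray/convexity argument (as after Theorem \ref{thm:2d}) shows $K^d$ is covered; thus $\gamma^d_{2d+4}(K^d)\le\lambda$ for all $d\ge4$.

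\textbf{Lower bound for $d=4$.} Suppose a covering by $12$ copies of ratio $\lambda<\frac57$ exists. As $\frac57<\frac34$ and any two distinct vertices of $K^4$ are at $\ell_1$-distance $2$, Remark 1 forces each copy to contain at most one vertex, so at least $8$ copies are spent on the $8$ vertices and at most $4$ remain free. For each facet $F$ the simplex $U_F$ is nonempty; by Lemma \ref{lem:center is not covered} a copy covering a vertex of $F$ meets $F$ only in $\{\beta_j\ge\mu\}$, which is disjoint from $U_F$, and for a vertex $v\notin F$ a direct computation gives $\|v-p\|_1=2$ for every $p\in U_F$, so by Remark 1 no vertex copy can contain any point of any $U_F$. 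Consequently all $16$ simplices $U_F$ must be covered by the $\le 4$ free copies.

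\textbf{The main obstacle.} It remains to show that four copies of ratio $\lambda<\frac57$ cannot cover the sixteen $U_F$. The decisive metric fact, analogous to Lemmas \ref{lem:diagonal}--\ref{lem:face diagonal}, is that suitable vertices of $U_F$ and $U_{F'}$ for ``distant'' facets lie at $\ell_1$-distance $\frac{2(2d-3)}{2d-1}=\frac{10}{7}$ (for instance the vertex of $U_F$ with $F=(+,+,+,+)$ having small coordinate in position $3$ and the vertex of $U_{F'}$ with $F'=(-,-,+,+)$ having small coordinate in position $4$), so by Remark 1 such a pair cannot share a copy once $\lambda<\frac57$; moreover the computation above shows a single copy covers the full regions of one four-facet sign family only when $\lambda\ge\frac57$. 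Heuristically this caps each free copy at the full regions of three facets, i.e.\ at most $12<16$. Making this rigorous is the crux, because a single $U_F$ may be split among several copies, and—as one checks—the largest family of region-vertices that are pairwise at distance $\ge\frac{10}{7}$ is too small to force the contradiction by a bare packing count. I would therefore proceed exactly as in the $m=2d+3$ proof: assign each facet centre to a free copy covering it, invoke Lemmas \ref{lem:diagonal}--\ref{lem:face diagonal} to conclude that the centres covered by one copy differ pairwise in at most two coordinates, and then run a case analysis on how these centres distribute among the four copies, in each case producing a vertex of some $U_F$ lying at $\ell_1$-distance $>2\lambda$ from the only available centre, which forces a fifth copy and contradicts $m=12$. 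The organisation of this case analysis, rather than any individual estimate, is the hard part.
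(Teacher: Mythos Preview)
Your upper bound is the paper's argument with cosmetic changes: the $2d$ vertex copies agree, and your four extra copies centred at $\frac{2}{2d-1}(\varepsilon_{d-1}e_{d-1}+\varepsilon_de_d)$ play exactly the role of the paper's copies centred at $(\pm\frac{1.5}{2d-1},\pm\frac{1.5}{2d-1},0,\dots,0)$. The check that each such copy swallows the central simplex $U_F$ on every facet with matching signs is the same $\ell_1$-distance computation in both.

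For the lower bound at $d=4$ your route diverges from the paper, and what you actually write is only a sketch. The paper does \emph{not} run a case analysis on facet centres. It does precisely what you dismissed as a ``bare packing count'': it names four explicit $U_F$-vertices
\[
n_{1,1}=\bigl(\tfrac17,\tfrac27,\tfrac27,\tfrac27\bigr),\quad
n_{10,3}=\bigl(\tfrac27,-\tfrac27,\tfrac17,-\tfrac27\bigr),\quad
n_{12,3}=\bigl(-\tfrac27,-\tfrac27,-\tfrac17,\tfrac27\bigr),\quad
n_{14,1}=\bigl(-\tfrac17,\tfrac27,-\tfrac27,-\tfrac27\bigr),
\]
checks that all six pairwise $\ell_1$-distances equal $\tfrac{10}{7}$, and declares the contradiction. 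So the paper's entire lower-bound step is this single packing observation, not a case split on how centres of facets distribute among the free copies.

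The gap in your proposal is therefore twofold. First, the ``hard part'' you identify---the case analysis---is never carried out, so the lower bound is simply not proved in your write-up. Second, you explicitly rejected the direct approach that the paper in fact uses; your instinct that four mutually $\tfrac{10}{7}$-distant $U_F$-vertices against four free copies does not by itself force a contradiction is reasonable, but the paper is content with exactly that count, so at a minimum you have misjudged what the paper considers sufficient. If you want to follow the paper, replace your projected case analysis by the explicit four-point set above and the six distance checks; if you believe (as your remark suggests) that one more point or one more argument is genuinely needed to close the count, that would be a worthwhile observation to make explicit, but it is additional to, not a substitute for, the paper's argument.
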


\begin{proof}
First it will be proved that $\gamma_m^d(K^d)\le{2d-3\over 2d-1}$ for $d\ge 4$.
Let $\lambda={2d-3\over 2d-1}$ and consider the following homothetic copies of $K^d$.
$K^d_1=\lambda K^d+\left[\frac{2}{2d-1},0,0,\ldots,0\right]^T$, $K^d_2=\lambda K^d+\left[0,\frac{2}{2d-1},0,\ldots,0\right]^T$, $\ldots$, $K^d_d=\lambda K^d+\left[0,\ldots,0,\frac{2}{2d-1}\right]^T$, $K^d_{d+1}=\lambda K^d+\left[-\frac{2}{2d-1},0,0,\ldots,0\right]^T$, $\ldots$, $K^d_{2d}=\lambda K^d+\left[0,\ldots,0,-\frac{2}{2d-1}\right]^T$,
$K^d_{2d+1}=\lambda K^d+\left[{1.5\over 2d-1},{1.5\over 2d-1},0,0\ldots,0\right]^T$,
$K^d_{2d+2}=\lambda K^d+\left[-{1.5\over 2d-1},{1.5\over 2d-1},0,0\ldots,0\right]^T$,
$K^d_{2d+3}=\lambda K^d+\left[-{1.5\over 2d-1},-{1.5\over 2d-1},0,0\ldots,0\right]^T$,
$K^d_{2d+4}=\lambda K^d+\left[{1.5\over 2d-1},-{1.5\over 2d-1},0,0\ldots,0\right]^T$.
Now it will be proved that the surface of $K^d$ is covered by the homopthetic copies $K_1^d,\ldots,K^d_{2d+4}$. Consider a facet $F$ of $K^d$. Without loss of generality it can be assumed that F lies on the hyperplane $x_1+\ldots+x_d=1$. The homothetic image of $F$ with ratio $\lambda$ and centre $(1,0,0\ldots,0)$ ($(0,1,0\ldots,0),\ldots,(0,\ldots,0,1)$, resp.) is covered by $K^d_1$ ($K^d_2,\ldots,K^d_d$, resp.). Let $n_1=\left(\frac{1}{2d-1},
\frac{2}{2d-1},\frac{2}{2d-1},\ldots,\frac{2}{2d-1}\right)$,
$n_2=\left(\frac{2}{2d-1},
\frac{1}{2d-1},\frac{2}{2d-1},\ldots,\frac{2}{2d-1}\right)$,\ldots,
$n_d=\left(\frac{2}{2d-1},\ldots,\frac{2}{2d-1},\frac{1}{2d-1}\right)$.
The convex hull of the points $n_1,\ldots,n_d$ is uncovered by $K^d_1,\ldots,K^d_d$. Since 
$$\left|\left|\left({1.5\over 2d-1},{1.5\over 2d-1},0,0\ldots,0\right)-n_i\right|\right|_1$$
$$\le\max\left(2\frac{0.5}{2d-1}+(d-2)\frac{2}{2d-1},2\frac{0.5}{2d-1}+(d-3)\frac{2}{2d-1}+\frac{1}{2d-1}\right)=\frac{2d-3}{2d-1}=\lambda$$ for $i=1,\ldots,d$, the points $n_1,\ldots,n_d$ are covered by $K^d_{2d+1}$. Since $K^d_{2d+1}$ and the convex hull of the points $n_1,\ldots,n_d$ are convex bodies, $K^d_{2d+1}$ covers the convex hull of the points $n_1,\ldots,n_d$. Thus the facet $F$ is covered by $K^d_1,\ldots,K^d_d$, $K^d_{2d+1}$.
Similarly any other facet of $K^d$ is covered by $K^d_1,\ldots,K^d_{2d+4}$. Since the origin lies in each homothetic copy $K^d_1,\ldots,K^d_{2d+4}$, $K^d$ is covered by $K^d_1,\ldots,K^d_{2d+4}$ and $\gamma_{2d+4}^d(K^d)\le {2d-3\over 2d-1}$ for $d\ge 4$.

Now consider the case $d=4$. 
Let it be assumed that $\gamma_{12}^4(K^d)=\mu<{5\over 7}$.
Let $n_{1,1}=\left({1}/{7},{2}/{7},{2}/{7},{2}/{7}\right)$,
$n_{1,2}=\left({2}/{7},{1}/{7},{2}/{7},{2}/{7}\right)$,
$n_{1,3}=\left({2}/{7},{2}/{7},{1}/{7},{2}/{7}\right)$,
$n_{1,4}=(2/7,\linebreak2/7,2/7,1/7)$,
$n_{2,1}=\left(-1/7,2/7,2/7,2/7\right)$,
$\ldots$,
$n_{10,3}=\left(2/7,-2/7,1/7,-2/7\right)$,
$\ldots$,
$n_{12,3}=\left(-2/7,-2/7,-1/7,2/7\right)$,
$\ldots$,
$n_{14,1}=\left(-1/7,2/7,-2/7,-2/7\right)$
$\ldots$,
$n_{16,4}=(-2/7,-2/7,\linebreak-2/7,-1/7)$.
By Lemma \ref{lem:center is not covered}, the $8$ homothetic copies  of $K^4$ with ratio $\mu$ can cover the $8$ vertices of $K^4$ and the points $n_{1,1},\ldots,n_{16,4}$ are uncovered by these homothetic copies.
It will be proved that four homothetic copies of $K^4$ with ratio $\mu$ cannot cover the points $n_{1,1},\ldots,n_{16,4}$.
Since 
$||n_{1,1}-n_{10,3}||_1=
||n_{1,1}-n_{12,3}||_1=
||n_{1,1}-n_{14,1}||_1=
||n_{10,3}-n_{12,3}||_1=
||n_{10,3}-n_{14,1}||_1=
||n_{12,3}-n_{14,1}||_1=\frac{10}{7}=2\lambda$,
the points $n_{1,1}$, $n_{10,3}$, $n_{12,3}$ and $n_{14,1}$  cannot be covered by four homothetic copies of $K^4$ with ratio $\mu$, a contradiction.
\end{proof}

\begin{conj}
  If $m=2d+4$, then $\gamma_m^d(K^d)={2d-3\over 2d-1}$ for $d\ge 5$.
\end{conj}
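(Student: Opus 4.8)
The plan is to derive the conjecture from the matching lower bound, since the preceding theorem already gives $\gamma_{2d+4}^d(K^d)\le\frac{2d-3}{2d-1}$ for every $d\ge4$. Writing $\lambda=\frac{2d-3}{2d-1}$, I would assume for contradiction that $\gamma_{2d+4}^d(K^d)=\mu<\lambda$ and reproduce the reduction used throughout the paper. The $2d$ vertices of $K^d$ are pairwise at $\|\cdot\|_1$-distance $2>2\mu$, so by Remark 1 no homothetic copy of ratio $\mu$ contains two of them; hence $2d$ of the $2d+4$ copies are consumed one per vertex. By Lemma \ref{lem:center is not covered} each such copy meets a facet only inside the $\mu$-homothet of that facet about the covered vertex, so it misses every \emph{deep point} $n$ --- the point on the facet with sign pattern $\epsilon\in\{\pm1\}^d$ given by $n_i=\epsilon_i\frac{2}{2d-1}$ for $i\ne k$ and $n_k=\epsilon_k\frac{1}{2d-1}$ --- exactly as the points $n_1,\dots,n_d$ were left uncovered in the upper-bound construction. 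Thus the remaining $4$ copies must cover all $d\cdot2^d$ deep points, and it suffices to prove that four copies of ratio $\mu<\lambda$ cannot do so.

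For the core step I would aim for the clean ``five far points'' version of the $d=4$ argument: exhibit five deep points that are pairwise at $\|\cdot\|_1$-distance at least $2\lambda=\frac{4d-6}{2d-1}$. Because a copy of ratio $\mu$ has $\|\cdot\|_1$-diameter $2\mu<2\lambda$, Remark 1 shows each copy contains at most one of them, so four copies cover at most four, leaving a deep point uncovered --- a contradiction. To locate such points I would use the explicit distance formula: for deep points on facets $\epsilon,\delta$ with distinct deep directions $k,l$ and Hamming distance $h=|\{i:\epsilon_i\ne\delta_i\}|$, the $\ell_1$-distance equals $\frac{4h+2}{2d-1}$ when $k,l$ both lie in the agreement set $\{i:\epsilon_i=\delta_i\}$, and equals $\frac{4h}{2d-1}$ or $\frac{4h-2}{2d-1}$ otherwise. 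Two deep points on a common facet are at distance only $\frac{2}{2d-1}$, so the five points must sit on five distinct facets, and in every case the distance reaches $2\lambda$ only when $h\ge d-2$. The target therefore becomes a purely combinatorial one: five sign patterns pairwise at Hamming distance at least $d-2$, carrying a compatible assignment of deep directions.

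The hard part --- and the reason the statement is only a conjecture --- is that this combinatorial target is unattainable for $d\ge5$. Normalising one facet to $(+,\dots,+)$, any facet at Hamming distance $\ge d-2$ from it has at most two $+$ signs, and two such facets then agree in at least $d-4$ coordinates, hence lie within Hamming distance $4$ of each other; for $d\ge7$ this already forbids a third facet, so at most two facets can be pairwise at Hamming distance $\ge d-2$. More precisely the maximal number of such facets is $A(d,d-2)$, which equals $2$ for $d\ge7$ and is only $4$ for $d=5,6$. In all cases $d\ge5$ there are not five mutually $\ell_1$-distant deep points, so the clean contradiction is simply unavailable, and the borderline four-point configuration (as used for $d=4$) shows only that the four copies are each pinned to one far point --- which does not, by itself, prevent them from covering the remaining deep points.

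I therefore expect that settling the conjecture for $d\ge5$ requires an argument that goes beyond producing pairwise-distant points. A promising route is to keep the reduction above but replace the distance argument at the last step by a more global one: using Lemma \ref{lem:center is not covered} together with the convexity of the copies to show that a single copy of ratio $\mu<\lambda$ can reach the deep points of only a bounded family of facets, and then counting facets to conclude that four copies cannot exhaust all $2^d$ of them. Making such a ``reach'' bound quantitative and independent of $d$ is the main obstacle, and it is precisely what the elementary method of the present paper does not supply.
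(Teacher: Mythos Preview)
The statement you were given is Conjecture~3, not a theorem: the paper does \emph{not} prove it, and there is no proof in the paper to compare your attempt against. You clearly recognised this --- your write-up is not a proof but a diagnosis of why the paper's $d=4$ argument does not extend, and that diagnosis is essentially sound. Your reduction (the $2d$ vertex copies miss every deep point $n_{i,j}$, so the remaining four copies must cover all $d\cdot 2^d$ deep points) matches the paper's set-up exactly, and your observation that five pairwise $\ell_1$-distant deep points would finish the job, together with the coding-theoretic obstruction $A(d,d-2)\le 4$ for $d\ge5$, correctly pinpoints why the ``mutually far points'' trick runs out.

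One small correction to your account of the $d=4$ case: the paper actually exhibits only \emph{four} pairwise-distant deep points $n_{1,1},n_{10,3},n_{12,3},n_{14,1}$ (at mutual $\ell_1$-distance exactly $2\lambda=\tfrac{10}{7}$), not five. As you implicitly note, four such points force the four remaining copies to take one each, but that alone does not yet rule out covering the other deep points; so even for $d=4$ the argument as printed needs a fifth far point (or an additional step) to close. This does not affect your main conclusion --- that for $d\ge5$ the requisite supply of mutually far deep points is unavailable and a genuinely new idea is needed --- which is precisely why the paper records the statement as a conjecture.
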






\end{document}